\documentclass[12pt, reqno]{amsart}

\textheight225mm
\textwidth171.5mm
\addtolength{\topmargin}{-12.25mm}
\addtolength{\oddsidemargin}{-22.25mm}
\addtolength{\evensidemargin}{-22.25mm}

\usepackage{amsmath,amscd, float,rotating}
\usepackage{pb-diagram}
\usepackage{latexsym}
\usepackage{amsfonts,color}
\usepackage{amsmath}
\usepackage{amssymb}
\usepackage{txfonts}

%%%%%%%%%%%%%%%%%%%%%%%%%%%%%%%%%%%%%%%%%%%%%%%%%%%%%%%%%%%%%%%%%%%%%%%%%%%%%%%%%%%%%%%%%%%%%%%%%%%

\newcommand{\vol}{\mathrm{Vol}}
\newcommand{\diam}{\mathrm{diam}}

\newcommand{\Ric}{\mathrm{Ric}}
\newcommand{\Rm}{\mathrm{Rm}}

\newcommand{\de}{\partial}

\newcommand{\ve}{\varepsilon}

\newcommand{\ti}[1]{\tilde{#1}}
\newcommand{\wt}{\widetilde}

\renewcommand{\leq}{\leqslant}
\renewcommand{\geq}{\geqslant}
\renewcommand{\le}{\leqslant}
\renewcommand{\ge}{\geqslant}

%%%%%%%%%%%%%%%%%%%%%%%%%%%%%%%%%%%%%%%%%%%%%%%%%%%%%%%%%%%%%%%%%%%%%%%%%%%%%%%%%%%%%%%%%%%%%%%%%%%

\newtheorem{theorem}{Theorem}[section]

\newtheorem{corollary}[theorem]{Corollary}
\newtheorem{proposition}[theorem]{Proposition}

\numberwithin{equation}{section}

\theoremstyle{definition}
\newtheorem{remark}[theorem]{Remark}

\theoremstyle{definition}

%%%%%%%%%%%%%%%%%%%%%%%%%%%%%%%%%%%%%%%%%%%%%%%%%%%%%%%%%%%%%%%%%%%%%%%%%%%%%%%%%%%%%%%%%%%%%%%%%%%

\numberwithin{equation}{section}

\begin{document}

\title[$\epsilon$-regularity for Ricci flow]{Global $\ve$-regularity for 4-dimensional Ricci flow with integral scalar curvature bound}

\author{Wangjian Jian}

\address{Hua Loo-Keng Center of Mathematical Sciences, Academy of Mathematics and Systems Science, Chinese Academy of Sciences, Beijing, 100190, China.}

\email{wangjian@amss.ac.cn}

\pagestyle{headings}

\begin{abstract}
Ge-Jiang (Geom Funct Anal 27:1231-1256, 2017) proved global $\ve$-regularity for 4-dimensional Ricci flow with bounded scalar curvature. In this note, we extend this result to 4-dimensional Ricci flow with integral bound on the scalar curvature.
\end{abstract}

\maketitle

\section{Introduction}
\label{section:introduction}

In \cite{CT06}, Cheeger-Tian proved $\ve$-regularity theorem for $4$-dimensional Einstein manifolds without volume assumption. If one assume that the volume is noncollapsed, such regularity estimate is well known by  \cite{An89,BKN89,Ti90}. If one assume average $L^2$ bound on $\Rm$, such an $\ve$-regularity was proved by Anderson \cite{An92}, see also \cite{TiVi,TiVi2,Car14} for a proof of the $\ve$-regularity under such condition.

Cheeger-Tian \cite{CT06} conjectured that a similar $\ve$-regularity should hold on higher dimensional Einstein manifolds, $4$-dimensional shrinking solitons, Ricci flows and critical metrics (which are  metrics satisfying $\Delta \Ric=\Rm\ast \Ric$).

In \cite{Li10}, Li proved a local smoothing result for Riemannian manifolds with bounded Ricci curvatures in dimension 4. In \cite{MuWa15}, Munteanu-Wang proved that $4$-dimensional shrinking Ricci solitons have bounded Riemann curvature provided a bounded scalar curvature condition, where the Riemann curvature bound depends on the local geometry around the base point.  In \cite{Hu20}, Huang proved an $\ve$-regularity for shrinking Ricci solitons with $\ve$ depending on the distance to the base point.

In \cite{GJ17}, Ge-Jiang proved an $\ve$-regularity for shrinking Ricci solitons which generalizes Cheeger-Tian's result. Moreover, by proving a Backward Pseudolocality estimate for Riemann curvature, Ge-Jiang proved $\ve$-regularity for Ricci flow with bounded scalar curvature, which partially confirms Cheeger-Tian's conjecture in the Ricci flow case. 

In this short note, we extend Ge-Jiang's (\cite{GJ17}) $\ve$-regularity for Ricci flow to the case of integral bound on scalar curvature. The following is our main result.

\begin{theorem}\label{main theorem}
Given $p_0>2$, there exist constants $\ve=\ve(p_0)$, $C=C(p_0)$ such that the following holds.

Let $(M, g(t)_{t\in(-1,1)})$ be a Ricci flow on a compact 4-dimensional manifold $M$ with $\left\|R(t)\right\|_{L^{p_0}(M,g(t))}\leq S$ for all $t\in(-1,1)$, and suppose that at time $0$ we have
\begin{equation}\label{l2 bound on Rm}
\int_{M}\left|\Rm(g(0))\right|^2dg(0)\leq \ve,
\end{equation}
and the diameter $\diam(M, g(0))=D$. Then the curvature at $t=0$ is bounded:
\begin{equation}\label{bound on Rm}
\sup_{M}\left|\Rm(g(0))\right|\leq C\max\left\{D^{-2}, 1\right\}S^{\frac{p_0}{p_0-2}}.
\end{equation}
\end{theorem}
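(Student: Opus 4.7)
The plan is to argue by contradiction and parabolic rescaling, reducing the $L^{p_0}$ hypothesis on $R$ to a setting where the known $\ve$-regularity for Ricci-flat $4$-manifolds forces flatness of the blow-up. The scaling structure of the hypotheses is encouraging: in dimension four, $\int|\Rm|^2\,dg$ is scale invariant, while under parabolic rescaling $\tilde g(\tilde t) = Q\, g(\tilde t/Q)$ the scalar norm transforms as $\|\tilde R\|_{L^{p_0}(\tilde g)} = Q^{(2-p_0)/p_0}\|R\|_{L^{p_0}(g)}$, which tends to $0$ as $Q\to\infty$ because $p_0 > 2$. The exponent $p_0/(p_0-2)$ in the conclusion is precisely the one for which the target bound $|\Rm|\le C\, S^{p_0/(p_0-2)}$ is scale invariant under rescalings that preserve $\|R\|_{L^{p_0}}$; this confirms that $S^{p_0/(p_0-2)}$ is the natural critical scale associated to the $L^{p_0}$ input.

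\textbf{Blow-up and limit.} After an initial rescaling we may assume $S=1$ and $D\ge 1$, so that the factor $\max\{D^{-2},1\}$ in the conclusion reduces to a constant. If the estimate failed, there would be a sequence of flows $(M_i, g_i(t))$ with $\int|\Rm(g_i(0))|^2\le\ve_i\to 0$ and points $x_i\in M_i$ such that $Q_i := |\Rm|(x_i,0) \to \infty$. Hamilton's point-picking supplies $(y_i, Q'_i)$ with $Q'_i \ge Q_i$ and $|\Rm|\le 4 Q'_i$ on a parabolic neighbourhood whose size, in the rescaled metric $\tilde g_i(\tilde t) = Q'_i\, g_i(\tilde t/Q'_i)$, tends to infinity in both spatial and temporal directions. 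The rescaled flows then satisfy $|\widetilde{\Rm}|\le 4$ on an expanding parabolic neighbourhood of $(y_i, 0)$, $|\widetilde{\Rm}|(y_i,0)=1$, $\int|\widetilde{\Rm}|^2\le\ve_i\to 0$, and $\|\tilde R\|_{L^{p_0}}\to 0$. Assuming uniform $\kappa$-noncollapsing at unit scale, Hamilton--Cheeger--Gromov compactness yields a pointed $C^\infty_{loc}$ limit ancient flow $(M_\infty, g_\infty(t), y_\infty)$ on $(-\infty, 0]$.

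\textbf{Contradiction and the main obstacle.} The limit satisfies $R_\infty\equiv 0$. Substituting this into $\partial_t R = \Delta R + 2|\Ric|^2$ forces $\Ric_\infty\equiv 0$, so the flow is static Ricci-flat. Then $\int_{M_\infty}|\Rm_\infty|^2 = 0$ forces $\Rm_\infty\equiv 0$, contradicting $|\Rm_\infty|(y_\infty, 0)=1$. The delicate point is therefore the $\kappa$-noncollapsing at unit scale for the rescaled flows. In Ge--Jiang this follows from Perelman's $\mathcal{W}$-entropy monotonicity together with the pointwise bound on $R$, used to control the potential term $R\vp^2$ in the entropy integrand. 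In the present setting the pointwise bound on $R$ must be replaced by $\|R\|_{L^{p_0}}\le S$, and one controls $\int R\vp^2$ via H\"older's inequality combined with the Sobolev/log-Sobolev inequality along the flow; the assumption $p_0 > n/2 = 2$ is precisely the subcritical threshold needed to close the entropy estimate, at the price of the factor $S^{p_0/(p_0-2)}$ that appears in the final bound. This modified noncollapsing estimate is the principal technical obstacle, and it is what one expects to replace the use of bounded scalar curvature in the Backward Pseudolocality step of \cite{GJ17}.
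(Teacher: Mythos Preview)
Your approach is genuinely different from the paper's and contains a real gap at the noncollapsing step. The hypotheses concern a \emph{sequence} of different compact flows $(M_i,g_i(t))$, and nothing in the assumptions bounds Perelman's $\mu$- or $\nu$-functional uniformly in $i$: the log-Sobolev/entropy argument you allude to controls $\int R\vp^2$ via $\|R\|_{L^{p_0}}$, but it still requires a uniform lower bound on $\mu(g_i(t_0),\tau)$ at some earlier time, and no such bound is available from $\|R\|_{L^{p_0}}\le S$, $\int|\Rm|^2\le\ve$, and $\diam\ge 1$ alone (think of long thin flat tori). Without uniform noncollapsing, Hamilton--Cheeger--Gromov compactness fails and the blow-up limit need not exist. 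Also, the Backward Pseudolocality of Ge--Jiang (and of this paper) is a curvature-propagation estimate, not a noncollapsing statement, so your proposal to recover noncollapsing by modifying that step does not match what is actually proved there.

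The paper avoids noncollapsing entirely, and this is the point of invoking the Cheeger--Tian/Li key estimate: for $|\Ric|\le 3$ and $\int_{B_r}|\Rm|^2\le\delta$ one obtains a radius $s\ge a(\ve')r$ with $s^4\,\fint_{B_s}|\Rm|^2\le\ve'$, an \emph{average} bound that holds regardless of collapsing. The paper first extends Bamler--Zhang's improved curvature estimate and Ge--Jiang's Global Backward Pseudolocality to the $L^{p_0}$ setting, giving $|\nabla\Rm(g(0))|\le C A^{3/2}$ whenever $\sup_M|\Rm(g(0))|\le A$. With this derivative bound, if $A=|\Rm|(x_0,0)>s^{-2}$ then $|\Rm|\ge A/2$ on a ball of radius $\sim A^{-1/2}$; Bishop--Gromov (using only $|\Ric|\le 3$, not a volume lower bound) compares the average over this small ball to the Cheeger--Tian average over $B_s$ and gives a contradiction for $\ve'$ small. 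A separate bootstrapping (Proposition~\ref{Pro: Ricci estimate}) removes the auxiliary assumption $|\Ric|\le 3$. Thus the argument is a direct, single-time-slice estimate rather than a contradiction/compactness scheme, and no injectivity-radius lower bound is ever needed.
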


\begin{remark}
By scaling the given Ricci flow in Theorem \ref{main theorem} parabolically by $Q:=S^{\frac{p_0}{p_0-2}}$, we may assume that $S=1$ in Theorem \ref{main theorem}. Also, to prove Theorem \ref{main theorem}, we may assume $D\geq 1$ since we can rescale the metric parabolically by $D^{-2}$ when $D\leq 1$.
\end{remark}

{\bf Acknowledgements.} The author would like to thank his advisor Gang Tian for constant encouragement and support.  The author also would like to thank Wenshuai Jiang, Bin Guo and Q.S. Zhang for many helpful discussions.

\section{Curvature Derivative Estimates}
\label{Curvature Derivative Estimates}
In this section, we will prove the curvature derivative estimate, which is the key estimate toward our $\ve$-regularity estimate.

First, we prove the following improved curvature estimate on Ricci flow. The same result for Ricci flow with bounded scalar curvature could be found in Bamler-Zhang \cite[Lemma 6.1]{BZ17}; see also Wang \cite{Wang12}.

\begin{proposition}[Improved Curvature Estimate]\label{Prop: improved curvature estimate}
Given $n\geq 4$, $p_0>\frac{n}{2}$, for $m=0,1,2,\dots$, there exist constants $C_m=C_m(n,p_0)<\infty$ such that the following holds.

Let $(M^n,g(t)_{t\in[0,T)})$ be a Ricci flow on a compact $n$-dimensional manifold $M$ with $\left\|R(t)\right\|_{L^{p_0}(M,g(t))}\leq\rho\leq 1$ for all $t\in[0,T)$. Let $x_0\in M$, $t_0\in[0,T)$ and $0< r_0^2\leq \min\left\{1, t_0\right\}$. Assume that the ball $B(x_0, t_0, r_0)$ is relatively compact, and we have $\left|\Rm\right|\leq r_0^{-2}$ on $P(x_0, t_0, r_0, -r_0^{-2})$.

Then we have $\left|\nabla^m\Ric\right|(x_0, t_0)\leq C_m\rho^{\frac{1}{2}} r_0^{-m-1-\frac{n}{2p_0}}$ for $m=0,1,2,\dots$, and $\left|\de_t\Rm\right|(x_0, t_0)\leq C_0\rho^{\frac{1}{2}} r_0^{-3-\frac{n}{2p_0}}$.
\end{proposition}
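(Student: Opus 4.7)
By the parabolic rescaling $\wt g(\wt t) := r_0^{-2} g(t_0 + r_0^2 \wt t)$, it suffices to prove the proposition at $r_0 = 1$: after rescaling $|\Rm| \le 1$ on $P(x_0, 0, 1, -1)$ and $\|R\|_{L^{p_0}} \le \wt\rho := r_0^{2-n/p_0} \rho \le \rho$ (using $p_0 > n/2$ and $r_0 \le 1$), and the claims become $|\nabla^m \Ric|(x_0,0) \le C_m \wt\rho^{1/2}$ and $|\de_t \Rm|(x_0,0) \le C_0 \wt\rho^{1/2}$. A direct scaling check shows that these estimates rescale back to the stated bounds on the original flow. I drop the tildes henceforth.

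\textbf{Base case $m = 0$.} The key input is the scalar curvature evolution $(\de_t - \Delta) R = 2|\Ric|^2$. Pick a non-negative cutoff $\phi$ supported in $P(x_0, 0, 1, -1)$ with $\phi \equiv 1$ on $P' := P(x_0, 0, 1/2, -1/4)$ and bounded derivatives. Multiplying by $\phi^2$, integrating over spacetime, and integrating by parts in space (for $\Delta R$) and in time (using $\de_t(dg) = -R\,dg$ for $\de_t R$) bounds $2\iint \phi^2|\Ric|^2$ by a sum of integrals of $|R|$ and $R^2$ over $\mathrm{supp}(\phi)$. H\"older's inequality with the $L^{p_0}$ bound on $R$, combined with the uniform volume bound on $P$ (from $|\Rm|\le 1$), yields $\iint_{P'}|\Ric|^2 \le C\rho$. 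On the other hand, the Bochner-type identity $(\de_t - \Delta)|\Ric|^2 = -2|\nabla \Ric|^2 + \Rm \ast \Ric \ast \Ric$ together with $|\Rm| \le 1$ makes $u := |\Ric|^2$ a subsolution of $(\de_t - \Delta) u \le C u$. Parabolic Moser iteration --- whose constants are uniform because $|\Rm| \le 1$ controls the local Sobolev constants --- then gives $|\Ric|^2(x_0,0) \le C\iint_{P'}|\Ric|^2 \le C\rho$, i.e., $|\Ric|(x_0,0) \le C\rho^{1/2}$.

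\textbf{Induction on $m$ and the $\de_t \Rm$ estimate.} For $m \ge 1$, inductively assume $|\nabla^j \Ric| \le C_j \rho^{1/2}$ on a slightly larger parabolic cylinder for all $j < m$; Shi's derivative estimates applied to $|\Rm| \le 1$ supply $|\nabla^j \Rm| \le C'_j$ on a slightly smaller cylinder. The identity
\[
(\de_t - \Delta)|\nabla^m \Ric|^2 = -2|\nabla^{m+1}\Ric|^2 + \sum_{i+j = m} \nabla^i \Rm \ast \nabla^j \Ric \ast \nabla^m \Ric,
\]
combined with AM-GM and the inductive bounds on $|\nabla^j \Ric|$ for $j < m$, yields $(\de_t - \Delta)|\nabla^m \Ric|^2 \le C|\nabla^m \Ric|^2 + C\rho$. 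The integral $\iint \phi^2 |\nabla^m \Ric|^2 \le C\rho$ follows by applying the same IBP strategy to the evolution inequality for $|\nabla^{m-1}\Ric|^2$ (whose right-hand side contains $-2|\nabla^m \Ric|^2$) and invoking the inductive bound on $|\nabla^{m-1}\Ric|$. Parabolic Moser iteration on $v := |\nabla^m \Ric|^2 + C'\rho$ closes the induction. Finally, a direct computation using $\de_t \Gamma^k_{ij} = -\nabla_i R^k_j - \nabla_j R^k_i + \nabla^k R_{ij}$ gives
\[
\de_t \Rm = \nabla^2 \Ric + \Rm \ast \Ric,
\]
with no $\Rm \ast \Rm$ term; combining this with the $m = 0$ and $m = 2$ estimates yields $|\de_t \Rm|(x_0,0) \le C\rho^{1/2}$.

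\textbf{Main obstacle.} The delicate point throughout is propagating the sharp $\rho^{1/2}$ factor --- strictly better than the trivial $O(1)$ bound Shi's estimates alone would yield after rescaling --- through every step. The IBP in the base case generates $\de_t(dg) = -R\,dg$ and $\phi^2 R^2$ contributions that must be reabsorbed into the $L^{p_0}$ bound on $R$; this is precisely where $p_0 > n/2$ enters, guaranteeing in particular $\rho^2 \le \rho$ after H\"older on the bounded cylinder. In the inductive step, the cross terms $\nabla^i \Rm \ast \nabla^j \Ric$ with $j = 0$ or $j = m$ require careful AM-GM splitting so that the $\rho^{1/2}$ scaling is not degraded: the Shi bounds on $\Rm$ are $O(1)$ while the inductive bounds on $\Ric$-derivatives are $O(\rho^{1/2})$, so this amounts to consistent bookkeeping at every level of the induction. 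The absence of a $\Rm \ast \Rm$ term in the $\de_t \Rm$ identity --- which would otherwise spoil the $\rho^{1/2}$ gain --- is the other crucial structural ingredient.
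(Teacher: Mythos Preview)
Your proof follows the same core strategy as the paper: after parabolically rescaling to $r_0=1$, integrate the scalar curvature evolution $(\de_t-\Delta)R=2|\Ric|^2$ against a cutoff and use H\"older with the $L^{p_0}$ bound on $R$ to obtain a spacetime $L^2$ bound on $\Ric$ of order $\rho^{1/2}$; then upgrade this to a pointwise bound via parabolic regularity; finally use the identity $\de_t\Rm=\Rm\ast\Ric+\nabla^2\Ric$ (with no $\Rm\ast\Rm$ term) to control $|\de_t\Rm|$.

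The implementations differ. The paper carries out the entire argument in the exponential coordinate chart $p:B(0,\alpha)\subset\RR^n\to M$, where Shi's estimates make $\ti g=p^*g$ $2$-bilipschitz to the Euclidean metric with uniform $C^m$ bounds; in this chart the paper applies standard $L^2\to L^\infty$ parabolic regularity directly to the \emph{linear} tensor equation $(\de_t-\Delta_{\ti g}-2\Rm)\Ric=0$, and then Schauder estimates in the chart give all $|\nabla^m\Ric|$ bounds at once with no induction. You instead run Moser iteration intrinsically on the scalar subsolutions $|\nabla^m\Ric|^2$ and induct on $m$. This is a legitimate alternative, but your justification ``$|\Rm|\le 1$ controls the local Sobolev constants'' is not correct as written: a curvature bound alone does not control the Sobolev constant on $M$ without a volume lower bound, and there is none in the hypotheses. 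The fix is exactly the paper's move --- lift to the exponential chart, where the bilipschitz-Euclidean control furnishes uniform Sobolev constants regardless of collapsing on $M$ --- so your Moser iteration should also be phrased there. With that adjustment the two arguments are essentially equivalent; the paper's route via linear parabolic theory and Schauder is a bit more economical since it avoids the $m$-induction, while your approach is more elementary in that it only uses scalar Moser iteration rather than tensor-valued interior estimates.
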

\begin{proof}
We follow the proof of Bamler-Zhang \cite[Lemma 6.1]{BZ17}. We first rescale the given Ricci flow parabolically by $r_0^{-2}$ such that $r_0=1$. Then for the new flow, we have 
\begin{equation}\label{integral bound on R for rescaled flow}
\left\|R(t)\right\|_{L^{p_0}(M,g(t))}\leq\rho r_0^{2-\frac{n}{p_0}},  \qquad \text{for all} \qquad  t\in[0,r_0^{-2}T).
\end{equation}
and $\left|\Rm\right|\leq 1$ on $P(x_0, t_0, 1, -1)$.

Observe that by Shi's estimates (cf \cite{Sh89}) there are universal constants $D_0, D_1, \dots < \infty$ such that
\begin{equation} \label{eq: Shi estimate}
|{\nabla^m \Rm}| < D_m,  \qquad \text{on} \qquad P (x_0, t_0, \tfrac12, - \tfrac12).
\end{equation}
for all $m = 0, 1, 2 \dots$. Let $\alpha > 0$ be a small constant, whose value will be determined later, and consider the exponential map
$$ p : B(0,\alpha) \subset \mathbb{R}^n \longrightarrow B(x_0, t_0,\alpha ) \subset M. $$
based at $x_0$ with respect to the metric $g(t_0)$. Let $\ti{g}(t) = p^* g(t)$, $t \in [t_0 - \alpha^2, t_0]$ be the rescaled pull back of $g(t)$. By Jacobi field comparison and distance distortion estimates under the Ricci flow, we find that there is a universal choice for $\alpha$ such that the metric $\ti{g}(t)$ is 2-bilipschitz to the Euclidean metric on $\mathbb{R}^n$ for all $t \in [ t_0 - \alpha^2, t_0]$. Moreover, it follows from (\ref{eq: Shi estimate}) that there are universal constants $\wt{D}_0, \wt{D}_1, \ldots < \infty$ such that
$$ |{\de^m \ti{g}(t)}| < \wt{D}_m,  \qquad \text{on} \qquad B(0, \alpha ) \times [t_0 - \alpha^2 , t_0]. $$

Let now $\phi \in C_0^\infty (B(0, \alpha))$ be a cutoff function that satisfies $0 \leq \phi \leq 1$ everywhere and $\phi = 1$ on $B(0, \tfrac12 \alpha)$.
This cutoff function can be chosen such that $|\partial \phi|, |\partial^2 \phi|$ are bounded by some universal constant $C_1 < \infty$.
This implies that there is a universal constant $C_2 < \infty$ such that
$$ |{\Delta_{\ti{g}(t)} \phi}| < C_2, \qquad \text{for all} \qquad t \in [t_0 - \alpha^2, t_0]. $$
We now make use of the evolution equation for the scalar curvature of $\ti{g}(t)$:
$$ \de_t R =  \Delta_{\ti{g}(t)} R + 2 |{\Ric}|^2. $$
Integrating this equation against $\phi$ and using integration by parts yields $\lambda$
\begin{equation}\label{Equ: integration of evolution equation of R 1}
\begin{split}
&\bigg| \partial_t \int_{B(0, \alpha )} R(\cdot, t) \phi  d\ti{g}(t) - \int_{B(0, \alpha )} 2 |{\Ric}(\cdot, t)|^2 \phi d\ti{g}(t) \bigg|\\
=& \bigg|\int_{B(0, \alpha )} \Delta_{\ti{g}(t)}R(\cdot, t) \phi  d\ti{g}(t) - \int_{B(0, \alpha )} R(\cdot, t)^2 \phi d\ti{g}(t)\bigg|\\
\leq& \int_{B(0, \alpha )} |R(\cdot, t)| \cdot|\Delta_{\ti{g}(t)}\phi|  d\ti{g}(t) + \int_{B(0, \alpha )} |R(\cdot, t)|^2 \phi d\ti{g}(t)\\
\leq& C_2\int_{B(0, \alpha )} |R(\cdot, t)| d\ti{g}(t) + \int_{B(0, \alpha )} |R(\cdot, t)|^2 d\ti{g}(t)\\
\leq& C_2\left\|R(t)\right\|_{L^{p_0}(M,\ti{g}(t))}\cdot \vol_{\ti{g}(t)}(B(0, \alpha))^{\tfrac{p_0-1}{p_0}} + \left\|R(t)\right\|_{L^{p_0}(M,\ti{g}(t))}^2\cdot \vol_{\ti{g}(t)}(B(0, \alpha))^{\tfrac{p_0-2}{p_0}}\\
\leq& C(n, p_0)\cdot\rho r_0^{2-\frac{n}{p_0}}.\\
\end{split}
\end{equation}
where we have used the fact that $p_0>\frac{n}{2}\geq 2$ and $\rho\leq 1$. Integration in time and applying H\"older inequality as in Equation \eqref{Equ: integration of evolution equation of R 1} gives
\begin{equation}\label{Equ: integration of evolution equation of R 2}
\begin{split}
& \int_{t_0-\alpha^2}^{t_0}\int_{B(0, \alpha )} 2 |{\Ric}(\cdot, t)|^2 \phi d\ti{g}(t)dt \\
\leq& \int_{B(0, \alpha )} |R(\cdot, t_0)|^2 \phi d\ti{g}(t_0)+\int_{B(0, \alpha )} \left|R(\cdot, t_0-\alpha^2)\right|^2 \phi d\ti{g}(t_0-\alpha^2)+C(n, p_0)\cdot\rho r_0^{2-\frac{n}{p_0}}\\
\leq& C(n, p_0)\cdot\rho r_0^{2-\frac{n}{p_0}}.\\
\end{split}
\end{equation}
Hence we have 
$$\Vert {\Ric} \Vert_{L^2 \left(B(0, \frac12 \alpha ) \times [t_0 - \alpha^2, t_0]\right)} \leq C(n, p_0)\cdot\rho^{\frac{1}{2}} r_0^{1-\frac{n}{2p_0}}.$$
Note that $\Ric$ satisfies the linear parabolic evolution equation
\begin{equation} \label{Equ: Ric evolution}
 \left(\de_t - \Delta_{\ti{g}(t)} - 2 \Rm \right) \Ric = 0.
\end{equation}
The coefficients of this equation are universally bounded in every $C^m$-norm. Hence it follows from standard parabolic theory that for some universal $\ti{C} < \infty$
$$ |{\Ric}| (x_0, t_0)\leq \ti{C}\cdot 
\Vert {\Ric} \Vert_{L^2 \left(B(0, \frac12 \alpha ) \times [t_0 - \alpha^2, t_0]\right)} \leq C(n, p_0)\cdot\rho^{\frac{1}{2}} r_0^{1-\frac{n}{2p_0}}. $$
By applying the lemma at smaller scales, we obtain that
$$ |{\Ric}| < C(n, p_0)\cdot\rho^{\frac{1}{2}} r_0^{1-\frac{n}{2p_0}}, \qquad \text{on} \qquad B(0,\tfrac14 \alpha) \times \left[t_0 - \tfrac12 \alpha^2, t_0 \right]. $$
We can now apply the Schauder estimates on (\ref{Equ: Ric evolution}) and obtain that for all $m \geq 0$
$$ |\nabla^m {\Ric}|(x_0, t_0)\leq C_m(n, p_0)\cdot\rho^{\frac{1}{2}} r_0^{1-\frac{n}{2p_0}}. $$
for some constants $C_m(n, p_0)< \infty$.

For the bound on $|\de_t \Rm|(x_0, t_0)$ observe that at $(x_0, t_0)$ (compare also with \cite[Lemma 7.2]{Ha82})
$$ \partial_t \Rm_{abcd} = \Ric_{ai} \Rm_{ibcd} + \Ric_{bi} \Rm_{aicd} 
+ \nabla_{ac} \Ric_{bd} + \nabla_{ad} \Ric_{bc} + \nabla_{bc} \Ric_{ad} - \nabla_{bd} \Ric_{ac}.  $$
so that
$$ | {\partial_t \Rm} |(x_0, t_0) \leq C_8 |{\Ric (x_0, t_0)}| \cdot |{\Rm (x_0, t_0)}| + \left|{\nabla^2 \Ric (x_0,t_0)}\right|\leq C(n, p_0)\cdot\rho^{\frac{1}{2}} r_0^{1-\frac{n}{2p_0}}. $$
Rescaling back will give us the desired estimates. This finishes the proof of the Proposition.
\end{proof}

\begin{proposition}[Global Backward Pseudolocality]\label{Prop: global backward pseudolocality}
Given $n\geq 4$, $p_0>\frac{n}{2}$, there exist constant $\ve=\ve(n,p_0)>0$ such that the following holds.

Let $(M^n,g(t)_{t\in(-1,1)})$ be a Ricci flow on a compact $n$-dimensional manifold $M$ with $\left\|R(t)\right\|_{L^{p_0}(M,g(t))}\leq 1$ for all $t\in(-1,1)$. If we have $\sup_{M}\left|\Rm(g(0))\right|\leq A$ with $A\geq 1$. Then we have
\begin{equation}\label{Equ: global backward pseudolocality}
\sup_{M}\left|\Rm(g(t))\right|\leq \ve^{-2}A.
\end{equation}
for all $-\ve^2 A^{-1}\leq t\leq 0$.
\end{proposition}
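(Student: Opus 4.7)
The plan is a contradiction / blow-up argument built on Proposition~\ref{Prop: improved curvature estimate}. First, parabolically rescaling the flow by $A$ reduces us to the case $A=1$: the $L^{p_0}$-norm of $R$ is multiplied by $A^{-1+n/(2p_0)}\le 1$ (for $A\ge 1$ and $p_0\ge n/2=2$), so the hypothesis $\|R\|_{L^{p_0}}\le 1$ is preserved, and the claim becomes $\sup_M |\Rm(g(t))|\le \ve^{-2}$ on $t\in[-\ve^2,0]$.

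Suppose the conclusion fails: for some $\ve_i\to 0$ there exist Ricci flows $(M_i,g_i(t))_{t\in(-1,1)}$ satisfying the hypotheses and times $t_i\in[-\ve_i^2,0]$ with $\sup_{M_i}|\Rm(g_i(t_i))|>\ve_i^{-2}$. Since $t\mapsto \sup|\Rm|$ is continuous on the compact $M_i$, pick the largest $T_i\in(t_i,0)$ with $\sup|\Rm(g_i(T_i))|=\ve_i^{-2}$; then $|T_i|<\ve_i^2$, $|\Rm|\le\ve_i^{-2}$ throughout $M_i\times[T_i,0]$, and some $x_i\in M_i$ realizes $|\Rm|(x_i,T_i)=\ve_i^{-2}$. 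Parabolically rescale by $Q_i:=\ve_i^{-2}$, setting $\tilde g_i(s):=Q_i\, g_i(T_i+s/Q_i)$, so that: $|\widetilde\Rm|\le 1$ on $M_i\times[0,s_i^*]$ with $s_i^*:=Q_i|T_i|\in(0,1]$; $|\widetilde\Rm|(\tilde x_i,0)=1$; $|\widetilde\Rm(\cdot,s_i^*)|\le Q_i^{-1}=\ve_i^2\to 0$ (from $|\Rm(g_i(0))|\le 1$); and $\|\tilde R\|_{L^{p_0}(\tilde g_i)}\le Q_i^{-1+n/(2p_0)}=\ve_i^{2-4/p_0}\to 0$ (using $n=4$, $p_0>2$).

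Apply Proposition~\ref{Prop: improved curvature estimate} to $\tilde g_i$ on interior spacetime subsets of $M_i\times[0,s_i^*]$: the vanishing factor $\tilde\rho_i^{1/2}\to 0$ forces $|\nabla^m\widetilde\Ric|$ and $|\partial_s\widetilde\Rm|\to 0$ on these subsets; together with Shi's estimates for spatial derivatives of $\widetilde\Rm$ (using $|\widetilde\Rm|\le 1$), this gives $C^\infty$ control of the curvature on interior parabolic neighborhoods. Assuming uniform $\kappa$-noncollapsing at the basepoints $(\tilde x_i,0)$, Hamilton's compactness yields a subsequential pointed Cheeger-Gromov-Hamilton limit $(M_\infty,\tilde g_\infty(s),\tilde x_\infty)$ on $s\in[0,s_\infty^*]$ with $s_\infty^*=\lim s_i^*\in(0,1]$. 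The limit satisfies $|\Rm_\infty|\le 1$, $R_\infty\equiv 0$, and hence --- via the scalar curvature evolution $\partial_s R=\Delta R+2|\Ric|^2$ --- $\Ric_\infty\equiv 0$; so $\tilde g_\infty$ is static and Ricci-flat. But smooth convergence gives $|\Rm_\infty|(\tilde x_\infty,0)=1$ and $|\Rm_\infty|(\tilde x_\infty,s_\infty^*)=\lim \ve_i^2=0$, and staticity forces these values to agree, yielding the contradiction $1=0$.

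The main obstacle is supplying the uniform $\kappa$-noncollapsing at the basepoints (and the concomitant lower bound $s_\infty^*>0$ needed for smooth CG convergence on a nontrivial time interval): in the bounded scalar curvature setting of Ge-Jiang~\cite{GJ17} this input is provided by the Bamler-Zhang noncollapsing estimate, whereas under only the integral bound $\|R\|_{L^{p_0}}\le 1$ one must adapt Perelman's $\mathcal{W}$-entropy (or reduced volume) monotonicity together with conjugate heat kernel Gaussian bounds of Q.\,S.\,Zhang type to the $L^{p_0}$ regime. This is the one delicate analytic input; the remainder is the rescaling / rigidity scheme above.
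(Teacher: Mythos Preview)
Your compactness/blow-up scheme is a genuinely different route from the paper's, and the gaps you flag are not minor technicalities but the heart of the matter. Two points:

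\emph{The missing inputs are essential and not available.} Your argument needs (i) uniform $\kappa$-noncollapsing at $(\tilde x_i,0)$ after blow-up, and (ii) a positive lower bound $s_i^*=Q_i|T_i|\ge c>0$. Neither follows from the hypotheses. For (i), noncollapsing under only an $L^{p_0}$ scalar curvature bound is not a standard result; the Bamler--Zhang/Zhang heat kernel machinery uses pointwise scalar bounds in an essential way, and adapting it to $L^{p_0}$ would be a separate theorem, not a remark. For (ii), nothing prevents $|T_i|\ll\ve_i^2$, so $s_i^*\to 0$ is entirely possible; without a nontrivial time interval you cannot take a Hamilton limit, cannot run Shi forward to get derivative bounds near $s=0$, and cannot make sense of the ``static limit'' contradiction $1=|\Rm_\infty|(\tilde x_\infty,0)=|\Rm_\infty|(\tilde x_\infty,s_\infty^*)=0$. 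These two inputs are exactly what backward pseudolocality is supposed to \emph{provide}, so assuming them is close to circular.

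\emph{The paper's proof avoids compactness entirely.} The paper argues by a direct, constructive induction on dyadic scales $r_k=2^{-k}$, proving: if $\sup_M|\Rm(g(s))|\le r^{-2}$ then $\sup_{[s-r^2,s]}\sup_M|\Rm|\le 2r^{-2}$. The inductive step uses only the bound $|\partial_t\Rm|\le C\rho^{1/2}r_0^{-3-n/(2p_0)}$ from Proposition~\ref{Prop: improved curvature estimate}: one first applies the inductive hypothesis at scale $r_i$ (and $r_{i+1}$) to get curvature control on a slightly longer backward interval, then integrates the time-derivative bound to propagate the estimate at scale $r_{i-1}$. The exponent $1-\tfrac{n}{2p_0}>0$ (from $p_0>n/2$) is exactly what makes the integrated error small for $r\le\ve(n,p_0)$. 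No noncollapsing, no injectivity radius, no limit flow is needed. This is both more elementary and more robust than the blow-up approach, and it is why the proposition holds under the integral scalar curvature hypothesis without further analytic input.
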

\begin{proof}
Denote $f(t)=\sup_{M}\left|\Rm(g(t))\right|$. If we scaling the given Ricci flow parabolically by $A$, we can make that $A=1$ for the new flow, and we still have $\left\|R(t)\right\|_{L^{p_0}(M,g(t))}\leq A^{-1+\frac{n}{2p_0}}\leq 1$ for the new flow. We only need to prove the following Claim.

\textbf{Claim:} There exists $\ve(n, p_0)>0$ such that if $r\le \ve(n, p_0)$ and if $f(s)\le r^{-2}$ with fixed $s\in [-\frac{1}{2}+2r^2,0]$, then
$$\sup_{-r^2+s\le t\le s}f(t)\le 2r^{-2}.$$

We prove this by induction on $r=r_k=2^{-k}$. Since $M$ is compact, there exists $k_0$ depending on $(M,g(t))$ with $t\in [-3/4,0]$ such that $\sup_{-3/4\le t\le 0}\sup_{(M,g(t))}|\Rm|\le r_{k_0}^{-2}$. Thus the claim is true for such $r=r_{k_0}$.

Now we assume the claim holds for all $r\le r_i$ with $r_i\le \ve$ for some $\ve(n, p_0)$ to be determined. We are going to prove the claim for $r=r_{i-1}$ provided $r_{i-1}\le  \ve$.

Assume $f(s)\le r^{-2}_{i-1}$ for some $-\frac{1}{2}+2r_{i-1}^2\le s\le 0$. By induction for $f(s)\le r^{-2} _i$ we have
$$\sup_{-r_i^2+s\le t\le s}f(t)\le 2 r_i^{-2}.$$
By induction again for $s'=s-r_i^2$, we get
$$ \sup_{-r_{i+1}^2-r_i^2+s\le t\le s-r_i^2}f(t)=\sup_{-5r_{i+1}^2+s\le t\le s-r_i^2}f(t)\le 2 r_{i+1}^{-2}. $$
Applying our Improved Curvature Estimate Proposition \ref{Prop: improved curvature estimate}, we have derivative estimates
$$\sup_{-r_i^2+s\le t\le s}\Big|\partial_t|\Rm|\Big|\le C(n, p_0) r_{i+1}^{-3-\frac{n}{2p_0}}.$$
Hence for all $-r_i^2+s\le t\le s$, we have
\[
\begin{split}
f(t)\le& f(s)+(s-t)C(n, p_0)r_{i+1}^{-3-\frac{n}{2p_0}} \\
\leq& r_{i-1}^2+C(n, p_0)r_i^2r_{i+1}^{-3-\frac{n}{2p_0}}\\
\leq& 1+C(n, p_0)r_{i-1}^{-1-\frac{n}{2p_0}}\\
=& \left(1+C(n, p_0)r_{i-1}^{1-\frac{n}{2p_0}}\right)\cdot r_{i-1}^{-2}.\\
\end{split}
\]
Since $p_0>\frac{n}{2}$, we can choose $\ve=\ve(n, p_0)$ small enough such that $C(n, p_0)r_{i-1}^{1-\frac{n}{2p_0}}\leq \frac{1}{8}$, then we arrive at
$$\sup_{-r_i^2+s\le t\le s}f(t)\le \frac{9}{8}r_{i-1}^{-2}\le r_i^{-2}.$$
By using the same argument as above to $s\leftarrow s-r_i^2$, we can show that
$$\sup_{-2r_i^2+s\le t\le s-r_i^2}f(t)\le \frac{10}{8}r_{i-1}^{-2}\le r_i^{-2}.$$
Using the same argument twice to $s\leftarrow s-2r_i^2$ and $s\leftarrow s-3r_i^2$, we have
$$\sup_{-4r_i^2+s\le t\le s}f(t)\le \frac{12}{8}r_{i-1}^{-2}\le 2r_{i-1}^{-2}.$$
We can use induction just because $s-4r_i^{2}\ge 2r_i^2-\frac{1}{2}$ for any $s\ge -\frac{1}{2}+2r_{i-1}^2$. Thus we can choose $\ve=\ve(n, p_0)$ small enough such that
$$\sup_{-r_{i-1}^2+s\le t\le s}f(t)\le 2r_{i-1}^{-2}\le 2r_{i-1}^{-2}.$$
This proves the claim. By letting $s=0$ and scaling back to the original flow and repalcing $\ve$ by $\frac{1}{2}\ve$ finishes the proof of the Proposition.
\end{proof}

We can now apply Shi's estimate (cf \cite{Sh89}) to obtain curvature derivative estimate.

\begin{corollary}[Curvature Derivative Estimate]\label{Cor: curvature derivative estimate}
Given $n\geq 4$, $p_0>\frac{n}{2}$,  there exist constant $C(n,p_0)<\infty$ such that the following holds.

Let $(M^n,g(t)_{t\in(-1,1)})$ be a Ricci flow on a compact $n$-dimensional manifold $M$ with $\left\|R(t)\right\|_{L^{p_0}(M,g(t))}\leq 1$ for all $t\in(-1,1)$. If we have $\sup_{M}\left|\Rm(g(0))\right|\leq A$ with $A\geq 1$. Then we have
\begin{equation}\label{Equ: curvature derivative estimate}
\sup_{M}\left|\nabla\Rm(g(0))\right|\leq C(n, p_0)A^{\frac{3}{2}}.
\end{equation}
\end{corollary}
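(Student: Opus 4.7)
The plan is to combine Proposition \ref{Prop: global backward pseudolocality} with Shi's derivative estimate \cite{Sh89}. The key observation is that the global backward pseudolocality propagates the curvature bound $|\Rm(g(0))| \le A$ (with a multiplicative loss) to the whole parabolic slab $M \times [-\ve^2 A^{-1}, 0]$, and this slab has precisely the natural parabolic depth for Shi's estimate to yield the desired $A^{3/2}$ bound on $|\nabla \Rm|$ at time $0$.

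Concretely, I would first apply Proposition \ref{Prop: global backward pseudolocality} with the hypothesis $\sup_M |\Rm(g(0))| \le A$, obtaining
\[
\sup_M |\Rm(g(t))| \le \ve^{-2} A \qquad \text{for all } t \in [-\ve^2 A^{-1}, 0],
\]
where $\ve = \ve(n, p_0)$ is the constant from that proposition. Setting $K := \ve^{-2} A$ and $\tau := \ve^2 A^{-1}$, one has $K\tau = 1$. Since $M$ is compact, no spatial cutoff is needed, and Shi's interior estimate applied on the slab $M \times [-\tau, 0]$ yields
\[
\sup_M |\nabla \Rm(g(0))| \le C(n)\, K\, \tau^{-1/2} = C(n)\, \ve^{-3}\, A^{3/2}.
\]
Absorbing $\ve = \ve(n, p_0)$ into the constant then produces the required $C(n, p_0) A^{3/2}$ bound.

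There is no genuine obstacle in this corollary beyond bookkeeping: the substantive work has already been carried out in Propositions \ref{Prop: improved curvature estimate} and \ref{Prop: global backward pseudolocality}, and the present statement is essentially a one-line consequence of matching scales ($K\tau = 1$) in Shi's time-derivative bound. As a sanity check, the exponent is exactly the one dictated by parabolic scaling: Shi converts one unit of curvature into one unit of $\tau^{-1/2}$, so $K \cdot \tau^{-1/2} = K^{3/2} = \ve^{-3} A^{3/2}$, which absorbs into $C(n,p_0) A^{3/2}$ since $\ve$ depends only on $n$ and $p_0$.
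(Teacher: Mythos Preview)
Your proposal is correct and follows essentially the same approach as the paper: apply the Global Backward Pseudolocality (Proposition \ref{Prop: global backward pseudolocality}) to propagate the curvature bound onto the slab $M\times[-\ve^2 A^{-1},0]$, then invoke Shi's estimate on that slab to obtain the $A^{3/2}$ derivative bound. The paper's proof is the same two-step argument stated in a single sentence, and your version simply makes the scale-matching ($K\tau=1$) explicit.
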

\begin{proof}
By our Global Backward Pseudolocality Proposition \ref{Prop: global backward pseudolocality}, there exists some constant $\ve=\ve(n, p_0)>0$, such that $\sup_{M}\left|\Rm(g(t))\right|\leq \ve^{-2}A$ for all $-\ve^2 A\leq t\leq 0$. Then Shi's estimate (cf \cite{Sh89}) gives us the desired curvature derivative estimate.
\end{proof}

\section{Proof of Theorem \ref{main theorem}}
\label{Proof of main Theorem}
In this section, we prove our main result Theorem \ref{main theorem}. First we recall the following result.
\begin{theorem}[\cite{CT06,Li10}]\label{Thm: curvature L2_small}
For any $\ve>0$ there exist constants $\delta(\ve)$ and $a(\ve)$ depending only on $\ve$ such that if $(M,g,x)$ is a complete $4$-dimensional manifold with $\sup_{B_r(x)}|\Ric|\le 3$ and $\int_{B_r(x)}|\Rm|^2\le \delta$ for some $r\le 1$, then  we have for some $s\ge a r$ that
\begin{align}
	s^4\fint_{B_s(x)}|\Rm|^2\le \ve.
\end{align}
\end{theorem}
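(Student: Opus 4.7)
My plan is a contradiction-and-compactness argument built on Cheeger-Colding's theory for manifolds with bounded Ricci curvature together with Cheeger-Tian's structure results for collapsed $4$-dimensional limits. After the preliminary reduction $g\mapsto r^{-2}g$, which keeps the Ricci bound $|\Ric|\le 3$ (the rescaled bound is $3r^{2}\le 3$) and leaves $\int|\Rm|^{2}$ invariant in dimension four, the statement reduces to the case $r=1$: find $s\ge a(\ve)$ with $s^{4}\fint_{B_{s}(x)}|\Rm|^{2}\le \ve$. I would then assume the conclusion fails along a sequence $(M_{i},g_{i},x_{i})$ with $|\Ric_{i}|\le 3$ on $B_{1}(x_{i})$, $\int_{B_{1}(x_{i})}|\Rm_{i}|^{2}\to 0$, yet $s^{4}\fint_{B_{s}(x_{i})}|\Rm_{i}|^{2}>\ve$ for every $s\in[a_{i},1]$ with $a_{i}\to 0$.

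The noncollapsed subcase is quick: if $\vol(B_{1}(x_{i}))\ge v>0$, then Bishop-Gromov under $|\Ric|\le 3$ gives $\vol(B_{s}(x_{i}))\ge c(v)s^{4}$ for $s\le 1$, whence
\[
s^{4}\fint_{B_{s}(x_{i})}|\Rm_{i}|^{2}\;\le\; c(v)^{-1}\int_{B_{1}(x_{i})}|\Rm_{i}|^{2}\;\longrightarrow\;0,
\]
contradicting the standing lower bound $\ve$ at $s=1$ for large $i$. The collapsed subcase is the main obstacle, and where the proof really uses \cite{CT06, Li10}. I would pass to a Gromov-Hausdorff limit $(X,d,x_\infty)$ via Cheeger-Colding, and exploit the vanishing of $\int|\Rm|^{2}$ to conclude that the regular part of $X$ is locally flat. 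The key input is then Cheeger-Tian's structure theorem for collapsed $4$-manifolds with bounded Ricci: there exists a definite scale $s_{0}=s_{0}(\ve)$ at which each $M_{i}$ carries a nilpotent Killing (or $F$-)structure whose local curvature is controlled by the total $L^{2}$ energy on a slightly larger ball, yielding $s_{0}^{4}\fint_{B_{s_{0}}(x_{i})}|\Rm_{i}|^{2}\to 0$ and contradicting the standing lower bound; reading this off gives $a(\ve)=s_{0}(\ve)$.

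A more hands-on alternative follows Li \cite{Li10}: starting from $g$ on $B_{r}(x)$, run a short-time Ricci flow (or use a mollification) to produce a smoothed metric $\tilde g$ on $B_{r/2}(x)$ with $|\Rm_{\tilde g}|\le C$ and $\int|\Rm_{\tilde g}|^{2}\le C\int|\Rm_{g}|^{2}$. Because $\tilde g$ has bounded curvature, the scale-invariant averaged curvature $s^{4}\fint_{B_{s}}|\Rm_{\tilde g}|^{2}$ satisfies a manageable monotonicity/continuity in $s$, and one extracts the required scale by a pigeonhole integration in $s$ using the smallness of the total energy. A uniform volume and distance comparison between $g$ and $\tilde g$ then transfers the estimate back to the original metric. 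In both routes the essential difficulty is identical: the averaging denominator $\vol(B_{s}(x))$ can degenerate along collapsing directions, so the scale $s\ge ar$ must be chosen in a manner coupled to the local collapse structure. It is precisely this coupling that forces the use of Cheeger-Tian's collapse theory and fixes the dependence $a=a(\ve)$.
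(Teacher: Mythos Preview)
The paper does not supply a proof of this theorem: it is stated as a quoted result from \cite{CT06,Li10} and used as a black box in the proof of Proposition~\ref{Pro: eps_regularity_Ricci_flow_assume_bounded_Ricci}. So there is nothing in the paper to compare your proposal against.

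That said, your sketch is a fair outline of how the cited references actually proceed. The scale reduction $g\mapsto r^{-2}g$ is correct (dimension four makes $\int|\Rm|^{2}$ scale-invariant and the Ricci bound only improves), and your dichotomy into noncollapsed versus collapsed is exactly the split in \cite{CT06}. The noncollapsed case you handle is genuinely easy, and your identification of the collapsed case as the real content---requiring the $F$-structure/nilpotent Killing structure from Cheeger--Tian to control the volume denominator at a definite scale---is accurate. The alternative route through Li's smoothing \cite{Li10} is also a legitimate path to the same conclusion. What you have written is a correct roadmap rather than a proof; the substantive work (existence of the $F$-structure with curvature control, or the smoothing with uniform comparison to the original metric) lives entirely inside the cited papers, and your sketch does not attempt to reproduce those steps. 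Since the present paper treats the statement as an import, that is consistent with how it is used here.
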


Next we prove an $\ve$-regularity estimate of Ricci flow provided bounded Ricci curvature at time zero.

\begin{proposition}\label{Pro: eps_regularity_Ricci_flow_assume_bounded_Ricci}
Given $p_0>2$, there exist constants $\ve=\ve(p_0)>0$ and $C=C(p_0)<\infty$ such that the following holds.

Let $(M^4,g(t)_{t\in(-1,1)})$ be a Ricci flow on a compact $4$-dimensional manifold $M$ with $\left\|R(t)\right\|_{L^{p_0}(M,g(t))}\leq 1$ for all $t\in(-1,1)$. Assume at the time $t=0$ we have $\sup_{M}|\Ric(g(0))|\le 3$, $\int_{M}|\Rm(g(0))|^2dg(0)\le \ve$ and $\diam(M,g(0))\ge 1$, then
	\begin{align}
		\sup_{M}|\Rm(g(0))|\le C.
	\end{align}
\end{proposition}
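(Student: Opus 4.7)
My plan is to prove Proposition \ref{Pro: eps_regularity_Ricci_flow_assume_bounded_Ricci} as a pointwise time-slice estimate at $t=0$, by combining Theorem \ref{Thm: curvature L2_small} with an $\ve$-regularity for $4$-manifolds with bounded Ricci curvature and small scale-invariant $L^2$-norm of $\Rm$ (the local smoothing of Li \cite{Li10}, building on the work of Cheeger-Tian \cite{CT06}). The Ricci flow structure and the integral scalar curvature bound on $(-1,1)$ do not actually enter this proof; only the three time-slice hypotheses at $t=0$ ($|\Ric|\leq 3$, $\int_M|\Rm|^2\leq \ve$, $\diam\geq 1$) are used.

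I would fix an arbitrary $x\in M$ and apply Theorem \ref{Thm: curvature L2_small} to the ball $B_{1/2}(x)\subset M$: this is legitimate because $r=\tfrac12\leq 1$, $\sup_{B_{1/2}(x)}|\Ric|\leq 3$, and the global bound gives $\int_{B_{1/2}(x)}|\Rm|^2\leq \ve$. Fix a small threshold $\ve_0>0$, to be specified below, and impose $\ve\leq \delta(\ve_0)$ in the hypothesis of the proposition. Theorem \ref{Thm: curvature L2_small} then furnishes a scale $s\geq \tfrac12 a(\ve_0)$ with
$$s^4\fint_{B_s(x)}|\Rm(g(0))|^2\,dg(0)\leq \ve_0.$$
Choosing $\ve_0$ below the universal threshold of Li's local smoothing, and applying that result on $B_s(x)$, where $|\Ric|\leq 3$ continues to hold and we now have scale-invariant $L^2$-smallness of $\Rm$, yields
$$|\Rm(g(0))|(x)\leq C s^{-2}\leq 4C/a(\ve_0)^2.$$
Since $x$ was arbitrary, this is the desired uniform curvature bound.

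The main delicacy is the two-step calibration of thresholds: $\ve_0$ must be chosen small enough for the bounded-Ricci $\ve$-regularity to trigger, and then $\ve\leq\delta(\ve_0)$ in the proposition's hypothesis is forced. Both can be taken as universal constants, so the $p_0$-dependence of $\ve$ and $C$ in the statement is in fact vacuous and is retained only for compatibility with the main theorem. A minor technical subtlety is whether the version of \cite{Li10} one uses requires scale-invariant $L^p$-smallness for some $p>2$ instead of $p=2$; if so, one can iterate Theorem \ref{Thm: curvature L2_small} once more to drop to an even smaller scale (where the $L^2$ quantity is even smaller), or directly appeal to the Moser iteration underlying \cite{Li10,CT06} before the final pointwise bound.
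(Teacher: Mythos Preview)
There is a genuine gap in your argument, and it lies exactly in the step you treat as routine: the claim that, once Theorem \ref{Thm: curvature L2_small} has produced a radius $s$ with $s^4\fint_{B_s(x)}|\Rm|^2\le \ve_0$, an $\ve$-regularity for $4$-manifolds with bounded Ricci curvature yields $|\Rm(g(0))|(x)\le Cs^{-2}$. No such $\ve$-regularity is available for a general metric with only $|\Ric|\le 3$. Li's theorem in \cite{Li10} is a \emph{smoothing} result: it produces a nearby metric with bounded sectional curvature, but it does not bound $|\Rm|$ pointwise for the original metric $g(0)$. The Moser-iteration route you mention also fails here: for a non-Einstein metric the curvature satisfies $\Delta \Rm = \Rm*\Rm + \nabla^2\Ric$, and one has no control on $\nabla^2\Ric$; in addition, the Sobolev inequality needed for Moser iteration requires a volume lower bound, which $\diam\ge 1$ alone does not provide. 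In short, the passage from ``small average $L^2$ energy'' to ``pointwise bound'' is precisely the hard step, and for Einstein manifolds it is handled in \cite{CT06} via the derivative estimate $|\nabla\Rm|\le C|\Rm|^{3/2}$ coming from the Einstein equation.

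The paper's proof uses the Ricci flow hypothesis exactly here, and this is not avoidable. After invoking Theorem \ref{Thm: curvature L2_small} as you do, the paper sets $A=\sup_M|\Rm(g(0))|$ and appeals to Corollary \ref{Cor: curvature derivative estimate} (which rests on the backward pseudolocality of Proposition \ref{Prop: global backward pseudolocality} and the improved curvature estimate of Proposition \ref{Prop: improved curvature estimate}, both using the $L^{p_0}$ scalar curvature bound along the flow) to obtain $|\nabla\Rm(g(0))|\le C_1(p_0)A^{3/2}$. This is the Ricci-flow substitute for the Einstein derivative bound, and it is what allows the standard contradiction argument: on a ball of radius $\sim A^{-1/2}$ around the maximum point one has $|\Rm|\ge A/2$, and comparing with the small average $L^2$ bound at scale $s$ forces $A\le s^{-2}$. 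The constants $C_1$ and hence $\ve,C$ genuinely depend on $p_0$ through this step, so your remark that the $p_0$-dependence is vacuous is also incorrect.
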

\begin{proof}
By Theorem \ref{Thm: curvature L2_small} we have for any $\ve'$ there exist $\ve=\ve(\ve')$ and $\eta=\eta(\ve')>0$ such that if $\int_M|\Rm(g(0))|^2dg(0)\le \ve$, then for every $x\in (M,g(0))$ and some $1>s\ge \eta>0$ that
$$s^4\fint_{B(x,0,s)}|\Rm(g(0))|^2dg(0)\le \ve'.$$
This will be good enough to deduce the curvature estimates. Indeed, denote $|\Rm|(x_0)=\sup_{M}|\Rm(g(0))|=A$. We will show that $A\le s^{-2}$. Otherwise,  by Curvature Derivative Estimate Corollary \ref{Cor: curvature derivative estimate} we have $\sup_{M}|\nabla\Rm(g(0))|\le C_1A^{3/2}$ for some $C_1=C_1(p_0)$. Hence we have $|\Rm(g(0))|\geq \frac{1}{2}A$, on $B\left(x_0, 0, \frac{1}{2C_1}A^{-\frac{1}{2}}\right)$. Therefore, by the volume comparison we have for a constant $C_0=C_0(p_0)$ that
$$\frac{1}{4}\le A^{-2}\fint_{B\left(x_0, 0, \frac{1}{2C_1}A^{-\frac{1}{2}}\right)}|\Rm|^2dg(0)\le C_0s^4\fint_{B(x_0,0,s)}|\Rm|^2dg(0)\le C_0\ve',$$
which leads to a contradiction by choosing $\ve'=\ve'(p_0)$ small enough. Hence we finish the proof.
\end{proof}

Now we can show that the Ricci curvature is bounded if the $L^2$ curvature integral is small.

\begin{proposition}\label{Pro: Ricci estimate}
Given $p_0>2$, there exist constants $\ve=\ve(p_0)>0$ and $C=C(p_0)<\infty$ such that the following holds.

Let $(M^4,g(t)_{t\in(-1,1)})$ be a Ricci flow on a compact $4$-dimensional manifold $M$ with $\left\|R(t)\right\|_{L^{p_0}(M,g(t))}\leq 1$ for all $t\in(-1,1)$. Assume at the time $t=0$ we have $\diam(M,g(0))\ge 1$ and
$$\int_{M}|\Rm(g(0))|^2dg(0)\le \ve$$
then we have
\begin{equation}\label{Equ: Ricci estimate}
	    \sup_{M}|\Ric(g(0))|\le C.
\end{equation}
\end{proposition}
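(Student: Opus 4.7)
The plan is a contradiction and parabolic-rescaling argument that reduces Proposition~\ref{Pro: Ricci estimate} to Proposition~\ref{Pro: eps_regularity_Ricci_flow_assume_bounded_Ricci}. Suppose the conclusion fails: then there exist Ricci flows $(M_i, g_i(t))_{t\in(-1,1)}$ satisfying the hypotheses with $\int_{M_i}|\Rm(g_i(0))|^2\,dg_i(0)\le \ve_i \to 0$, yet $Q_i := \sup_{M_i}|\Ric(g_i(0))| \to \infty$. Choose $x_i \in M_i$ at (or near) the maximum of $|\Ric(g_i(0))|$ and define the parabolically rescaled flow $\tilde g_i(t) := Q_i g_i(Q_i^{-1} t)$, which is a Ricci flow on $(-Q_i, Q_i)\supset(-1,1)$ with $\sup_{M_i}|\Ric(\tilde g_i(0))| = 1$ attained at $x_i$.

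The key observation is that every hypothesis of Proposition~\ref{Pro: eps_regularity_Ricci_flow_assume_bounded_Ricci} is preserved for $\tilde g_i$ once $i$ is large. The scalar curvature integral becomes $\|R(\tilde g_i(t))\|_{L^{p_0}(\tilde g_i)}= Q_i^{-1+2/p_0}\|R(g_i)\|_{L^{p_0}(g_i)}\to 0$ since $p_0>2$; the Riemann energy $\int|\Rm|^2\, d\vol$ is scale-invariant in dimension four, hence still bounded by $\ve_i\to 0$; the diameter inflates as $\diam(M_i, \tilde g_i(0))= Q_i^{1/2}\diam(M_i, g_i(0))\ge Q_i^{1/2}\to\infty$, so in particular $\ge 1$; and the Ricci bound $\sup|\Ric(\tilde g_i(0))|=1\le 3$ holds by construction. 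Proposition~\ref{Pro: eps_regularity_Ricci_flow_assume_bounded_Ricci} then gives $\sup_{M_i}|\Rm(\tilde g_i(0))|\le C(p_0)$, and Corollary~\ref{Cor: curvature derivative estimate} yields $\sup|\nabla\Rm(\tilde g_i(0))| \le C'(p_0)$.

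To close the contradiction, I would use the pointwise inequality $|\Rm|\ge \tfrac{1}{2}|\Ric|$ in dimension four to get $|\Rm(\tilde g_i(0))|(x_i)\ge 1/2$. The gradient bound then propagates this: $|\Rm(\tilde g_i(0))|\ge 1/4$ on a geodesic ball $B_{\tilde g_i(0)}(x_i, \eta)$ of definite radius $\eta = \eta(p_0)>0$. Hence
\[
\int_{M_i}|\Rm(\tilde g_i(0))|^2\,d\tilde g_i(0) \ge \tfrac{1}{16}\vol_{\tilde g_i(0)}(B(x_i, \eta)),
\]
which by the hypothesis tends to $0$, forcing $\vol_{\tilde g_i(0)}(B(x_i, \eta))\to 0$. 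The main obstacle is therefore a uniform volume lower bound on this ball: Bishop--Gromov supplies only the upper direction, and collapsing is a priori compatible with the hypotheses. To rule it out, I would extend the curvature bound $\sup|\Rm(\tilde g_i)|\le C$ to a definite backward time slab via Proposition~\ref{Prop: global backward pseudolocality} and then invoke Perelman's $\kappa$-noncollapsing (or an Anderson-type harmonic-radius estimate) to obtain $\vol_{\tilde g_i(0)}(B(x_i, \eta))\ge c(p_0)>0$, yielding the desired contradiction.
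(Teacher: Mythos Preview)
Your rescaling setup and the verification that Proposition~\ref{Pro: eps_regularity_Ricci_flow_assume_bounded_Ricci} applies to the rescaled flow are correct and match the paper's opening move. The gap you yourself flag, however, is genuine and is not closed by the fix you propose: Perelman's $\kappa$-noncollapsing requires a uniform bound on the $\mu$-functional (equivalently, some uniform control on the geometry at an earlier time), and nothing in the hypotheses supplies this across the sequence $(M_i,g_i)$. A thin collapsing direction is fully compatible with $\diam\ge 1$, $\|R\|_{L^{p_0}}\le 1$, and small $\int|\Rm|^2$, so the step $\vol_{\tilde g_i(0)}(B(x_i,\eta))\ge c(p_0)>0$ is unjustified. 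Anderson's harmonic-radius estimate is no help either, since it presupposes exactly the volume lower bound you are trying to produce.

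The paper closes the argument by a mechanism that avoids volume entirely. After obtaining $\sup_M|\Rm(\tilde g(0))|\le C_0$ from Proposition~\ref{Pro: eps_regularity_Ricci_flow_assume_bounded_Ricci} and extending this bound to a backward time slab via Proposition~\ref{Prop: global backward pseudolocality} (as you also propose), it feeds the result into the Improved Curvature Estimate, Proposition~\ref{Prop: improved curvature estimate}. The key observation you did not exploit is that the rescaled scalar-curvature norm $\|R(\tilde g)\|_{L^{p_0}}\le A^{-1+2/p_0}$ plays the role of $\rho$ there, so one obtains
\[
1=\sup_M|\Ric(\tilde g(0))|^2\le C(p_0)\,A^{-1+2/p_0},
\]
which forces $A\le C(p_0)$ directly since $p_0>2$. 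In short, the smallness to leverage is the decay of $\|R\|_{L^{p_0}}$ under the rescaling, not the scale-invariant smallness of $\int|\Rm|^2$; the former converts into pointwise Ricci smallness via Proposition~\ref{Prop: improved curvature estimate} with no noncollapsing input whatsoever. This also explains why the paper needs no sequence argument: a single flow and a fixed $\ve$ (the one from Proposition~\ref{Pro: eps_regularity_Ricci_flow_assume_bounded_Ricci}) suffice.
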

\begin{proof}
Let us fix $\ve$ as in Proposition \ref{Pro: eps_regularity_Ricci_flow_assume_bounded_Ricci} and denote the constant $C$ in Proposition \ref{Pro: eps_regularity_Ricci_flow_assume_bounded_Ricci} by $C_0$. Assume $\sup_{M}|\Ric(g(0))|=A>3$. We will show $A$ is bounded by a constant $C=C(n,p_0)$.

Consider the rescaling flow $\tilde{g}(t)=Ag(A^{-1}t)$ which satisfies the condition in Proposition \ref{Pro: eps_regularity_Ricci_flow_assume_bounded_Ricci} with scalar curvature integral bound $\left\|R(\ti{g}(t))\right\|_{L^{p_0}(M,\ti{g}(t))}\leq A^{-1+\frac{2}{p_0}}$ and $\sup_{M}|\Ric(\tilde{g}(0))|=1$. Applying Proposition \ref{Pro: eps_regularity_Ricci_flow_assume_bounded_Ricci}, we have
$$\sup_{M}|\Rm(\tilde{g}(0))|\le C_0.$$
Hence we can apply Global Backward Pseudolocality Proposition \ref{Prop: global backward pseudolocality} to find some $\ve_0=\ve_0(p_0)>0$ such that
$$\sup_{-\ve_0^2 C_0^{-1}\leq t\leq 0}\sup_{M}|\Rm(\tilde{g}(t))|\le \ve_0^{-2}C_0.$$
Then we can apply the Improved Curvature Estimate Proposition \ref{Prop: improved curvature estimate} to find that for some $C_1=C_1(p_0)$
$$1=\sup_{M}|\Ric(\tilde{g}(0))|^2\le C_1\cdot A^{-1+\frac{2}{p_0}}\cdot \ve_0^{-1-\frac{2}{p_0}}C_0^{\frac{1}{2}-\frac{1}{p_0}}.$$
But we have $p_0>2$, hence we obtain
$$A\leq C(p_0).$$
This finishes the proof.
\end{proof}

Now we can prove our main Theorem \ref{main theorem}.
\begin{proof}[Proof of Theorem \ref{main theorem}]
Theorem \ref{main theorem} follows directly from Proposition \ref{Pro: eps_regularity_Ricci_flow_assume_bounded_Ricci} and Proposition \ref{Pro: Ricci estimate}.
\end{proof}

\end{document}